\numberwithin{equation}{section}
\newtheorem{thm}{Theorem}[section]
\newtheorem{con}[thm]{Conjecture}
\newtheorem{definition}[thm]{Definition}
\newtheorem{lemma}[thm]{Lemma}
\newtheorem{prop}[thm]{Proposition}
\newtheorem{rmk}[thm]{Remark}
\newcommand{\GG}{{^{G}_{G}\mathcal{YD}^{\omega}}}
\newcommand{\BGGA}{{^{\mathbb{G}}_{\mathbb{G}}\mathcal{YD}^{\pi^*(\omega_{\underline{a}})}}}
\newcommand{\BGGJ}{{^{\mathbb{G}}_{\mathbb{G}}\mathcal{YD}^{\partial(J_{\underline{a}})}}}
\newcommand{\BGGo}{{^{\mathbb{G}}_{\mathbb{G}}\mathcal{YD}}}
\newcommand{\GGV}{{^{G_V}_{G_V}\mathcal{YD}^{\omega\mid_{G_V}}}}
\newcommand{\GGVV}{{^{\mathbb{G}_{\widetilde{V}}}_{\mathbb{G}_{\widetilde{V}}}\mathcal{YD}}}
\newcommand{\GGAA}{{^{G}_{G}\mathcal{YD}^{\omega_{\underline{a}}}}}
\newcommand{\HH}{{^{H}_{H}\mathcal{YD}}}
\newcommand{\GGB}{{^{\mathbb{G}}_{\mathbb{G}}\mathcal{YD}^{\pi^*\omega}}}
\newcommand{\bVG}{\textbf{Vec}_G^{\omega}}
\newcommand{\sg}{\mathbbm{g}}
\newcommand{\bg}{\mathbb{G}}
\newcommand{\Propcite}[2]{[\citealp{#1}, Proposition~#2]}
\title{\textbf{ Cohomology of Pointed Finite Tensor Categories  }}
\author{
	BOWEN LI AND GONGXIANG LIU}
\date{}
\setlist{nolistsep}
\begin{document}\large
	\maketitle
	
	\setlength{\oddsidemargin}{ -1cm}
	\setlength{\evensidemargin}{\oddsidemargin}
	\setlength{\textwidth}{15.50cm}
	\vspace{-.8cm}
	
	\setcounter{page}{1}
	
	\setlength{\oddsidemargin}{-.6cm}  
	\setlength{\evensidemargin}{\oddsidemargin}
	\setlength{\textwidth}{17.00cm}
		\thispagestyle{fancy}
	\fancyhf{}
	\fancyfoot[R]{\thepage}
	\fancyfoot[L]{Bowen Li, Gongxiang Liu:  School of Mathematics, Nanjing University, Nanjing 210093, P. R. China. \\ B.Li, e-mail: DZ21210002@smail.nju.edu.cn, G.Liu, e-mail: gxliu@nju.edu.cn.\\
		Mathematics Subject Classiﬁcation: 18G15; 16T05.}
	\fancyhead{} 
	\renewcommand{\footrulewidth}{1pt}
	\renewcommand{\headrulewidth}{0pt}
	\begin{abstract}
		We consider the ﬁnite generation property for cohomology algebra of pointed finite tensor categories via de-equivariantization and exact sequence of  finite tensor categories. As a result, we prove that all coradically graded pointed finite tensor categories over abelian groups have finitely generated cohomology. 
	\end{abstract}\par 
\textbf{Keywords}: Coquasi-Hopf algebra; Tensor category;  Finitely generated cohomology
\section{Introduction}
Let $\mathbbm{k}$ be an algebraically closed field of characteristic zero. Given a finite tensor category $\mathcal{C}$ over $\mathbbm{k}$, we may define the cohomology of $\mathcal{C}$ as follows:
$$\operatorname{H}^{\bullet}(\mathcal{C},V)=\operatorname{Ext}_{\mathcal{C}}^{\bullet}(\textbf{1},V)$$
where $V$ is an object in $\mathcal{C}$ and $\textbf{1}$ is the unit object in $\mathcal{C}$. We mainly focus on  cohomological ﬁniteness property: 
\begin{definition}
	A finite tensor category $\mathcal{C}$ over $\mathbbm{k}$ is said to have finitely generated cohomology (\rm{\textbf{FGC}} for brevity), if $\operatorname{H}^{\bullet}(\mathcal{C},\mathbf{1})$ is a ﬁnitely generated algebra and $\operatorname{H}^{\bullet}(\mathcal{C},V)$ is a ﬁnitely generated $\operatorname{H}^{\bullet}(\mathcal{C},\mathbf{1})$ -module for each $V$ in $\mathcal{C}$.
\end{definition}
In \cite{ftc}, Etingof and Ostrik proposed the well-known conjecture:
\begin{con}
	Any finite tensor category over $\mathbbm{k}$ satisfies \rm{\textbf{FGC}}.
	\end{con}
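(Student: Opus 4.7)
The plan is to attack the full conjecture by a two-stage reduction, from an arbitrary finite tensor category $\mathcal{C}$ first to its associated graded, and then from the graded category to ``simple'' building blocks via exact sequences in the sense of Etingof--Gelaki--Nikshych. Concretely, I would (i) construct a suitable filtration $\mathcal{C} = \mathcal{C}_0 \supseteq \mathcal{C}_1 \supseteq \cdots$ whose associated graded $\operatorname{gr}\mathcal{C}$ is coradically graded, (ii) set up a convergent spectral sequence relating $\operatorname{H}^{\bullet}(\operatorname{gr}\mathcal{C},\mathbf{1})$ to $\operatorname{H}^{\bullet}(\mathcal{C},\mathbf{1})$ and use a Noetherian descent argument to deduce \textbf{Fg} for $\mathcal{C}$ from \textbf{Fg} for $\operatorname{gr}\mathcal{C}$, and (iii) peel off the simple factors of $\operatorname{gr}\mathcal{C}$ via exact sequences $\mathcal{D}' \to \mathcal{D} \to \mathcal{D}''$, propagating \textbf{Fg} by a Lyndon--Hochschild--Serre type spectral sequence whose $E_2$-page is $\operatorname{H}^{\bullet}(\mathcal{D}'',\operatorname{H}^{\bullet}(\mathcal{D}',\mathbf{1}))$.

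For step (i)--(ii), the model is the lifting philosophy of Andruskiewitsch--Schneider: one expects $\mathcal{C}$ to deform $\operatorname{gr}\mathcal{C}$, and the deformation cocycle produces a filtration whose spectral sequence degenerates on a Noetherian sub-Hopf algebra, so that finite generation is transferred. The main technical issue here is that, for a non-pointed $\mathcal{C}$, there is no intrinsic coradical filtration of categorical origin; I would therefore try to replace the coradical filtration by the Jacobson radical filtration of an internal algebra $A \in \mathcal{C}$ whose module category recovers $\mathcal{C}$, or by the chief series coming from Gelaki--Nikshych's universal grading. For step (iii), the cleanest input is the existence of a non-trivial normal tensor subcategory whenever $\mathcal{C}$ is not simple as a tensor category; the argument of this paper via de-equivariantization is exactly an instance of this, and should combine with \textbf{Fg} for fusion categories (trivial by semisimplicity) to handle all categories built by iterated extensions from the pointed case and the simple non-pointed case.

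The pointed building blocks reduce, via the main results of this paper together with the theorem of Mastnak--Pevtsova--Schauenburg--Witherspoon on Nichols algebras of diagonal type, to \textbf{Fg} for arbitrary coradically graded pointed finite tensor categories, which one would attack by bootstrapping from the abelian coradical case treated here to the non-abelian case through a second layer of de-equivariantization by the center of the coradical group. The non-pointed, tensor-simple blocks would be attacked by analyzing their Drinfeld centers, which are modular, and using Deligne's theorem together with the known \textbf{Fg} results for small quantum groups and finite group schemes after passing through a symmetric Tannakian subcategory.

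The hard part — and the reason the conjecture is still open after two decades — is precisely the last point: there is no structural description of tensor-simple finite tensor categories that are not fusion, and no classification of Nichols algebras over non-abelian groups, so steps (iii) and the final reduction cannot at present be carried out unconditionally. Realistically, the proposal above is a program: it would reduce \textbf{Fg} for all finite tensor categories to \textbf{Fg} for (a) arbitrary Nichols algebras and (b) tensor-simple non-fusion categories, and the honest expectation is that the full conjecture requires substantial new input on at least one of these two fronts rather than a uniform categorical argument.
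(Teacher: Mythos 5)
The statement you were asked to prove is Conjecture 1.2, which is precisely the open finite-generation conjecture of Etingof--Ostrik; the paper does not prove it and does not claim to. It only establishes the special case of coradically graded pointed finite tensor categories over abelian groups (Theorem 1.3), via de-equivariantization for the diagonal type and an exact sequence with fusion quotient for the non-diagonal type. Your submission is, by your own admission in the final paragraph, a research program rather than a proof, so it cannot be accepted as a proof of the statement.

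Beyond that global issue, the individual reduction steps you sketch each contain a genuine gap. In step (ii), passing from \textbf{Fg} for $\operatorname{gr}\mathcal{C}$ to \textbf{Fg} for $\mathcal{C}$ via a filtration spectral sequence is exactly the hard deformation-theoretic problem: one needs the $E_\infty$-page to be Noetherian over a finitely generated subalgebra of \emph{permanent cycles}, and producing such permanent cycles is the content of the still-unresolved work on liftings (it is why even the lifting step for pointed Hopf algebras over abelian groups required the dedicated arguments of the papers cited here as \cite{MJPS} and \cite{AAPS}); no general ``Noetherian descent'' is available. In step (iii), an exact sequence $\mathcal{D}' \to \mathcal{D} \to \mathcal{D}''$ does not propagate \textbf{Fg} in general: the paper's Lemma \ref{lem2.13} requires the quotient $\mathcal{D}''$ to be a \emph{fusion} category, and there is no Lyndon--Hochschild--Serre argument known for arbitrary exact sequences of finite tensor categories. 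Finally, as you correctly note, the tensor-simple non-fusion blocks and Nichols algebras over non-abelian groups are unclassified, so the terminal cases of your induction cannot be handled. The honest conclusion is that your text identifies where the difficulty lies but does not close any of these gaps, and therefore does not constitute a proof of the conjecture.
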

For over twenty years, many beautiful results related to this conjecture have been obtained, especially from the perspective of Hopf algebras. For a finite-dimensional Hopf algebra $H$, then the category $\operatorname{Rep}(H)$ of finite-dimensional representations is a finite tensor category. By abuse of terminology, we say $H$ satisfies \rm{\textbf{FGC}}, if $\operatorname{Rep}(H)$ does. Ginzburg and Kumar in \cite{Fgcsqg} showed that all small quantum groups satisfy \rm{\textbf{FGC}}. Moreover, they computed their cohomology rings explicitly. In \cite{MJPS}, Mastnak, Pevtsova, Schauenberg and Witherspoon proved that any finite-dimensional pointed Hopf algebra over an abelian group (i.e., whose group-like elements form an abelian group) satisfies \rm{\textbf{FGC}}, subject to mild restrictions on the group’s order. This result was later extended to all finite-dimensional pointed Hopf algebras over abelian groups in \cite{AAPS}.  Related results for fields of positive characteristic can be found in  \cite{NWW19} and \cite{FN18}. 
\par On the other hand, some experts study this question from the perspective of category theory. In \cite{coho}, Negron and Plavnik characterized when the dual category and the center of a finite tensor category $\mathcal{C}$ satisfy \rm{\textbf{FGC}}, assuming $\mathcal{C}$
 itself satisfies \rm{\textbf{FGC}}. The condition of finitely generated cohomology is crucial in the theory of support varieties for finite tensor categories; further details may be found in \cite{Supp}  and \cite{2024supportvarietiesfinitetensor}. \par 
We mainly focus on the coradically graded pointed finite tensor category $\mathcal{C}$ over abelian group. Recall that a finite tensor category is pointed if every simple object is invertible. In \cite{AC}, the authors defined a finite tensor category as coradically graded if it is equivalent to the category of finite-dimensional comodules over a coradically graded coquasi-Hopf algebra. Then by the well-known Tannakian reconstruction theorem, there is a finite-dimensional coradically graded coquasi-Hopf algebra $M$ over abelian group, such that $\mathcal{C} \cong \operatorname{Comod}(M)$.  Thus the classification  of coradically graded pointed finite tensor categories over abelian groups(up to tensor equivalence) is equivalent to  the classification of finite-dimensional coradically graded coquasi-Hopf algebras over abelian groups(up to gauge equivalence). \par 
The classification of such coquasi-Hopf algebras was completed in \cite{QQG} and \cite{huang2024classification}.
Based on the type of the associated twisted Nichols algebra, the classification divides into two cases: diagonal type and non-diagonal type.
We address these cases separately to establish our main result on finitely generated cohomology:
\begin{thm}\label{B-thm1.3}
Let $\mathcal{C}$ be a coradically graded pointed finite tensor category  over abelian group, then $\mathcal{C}$ satisfies \rm{\textbf{FGC}}.
\end{thm}
For convenience, we  call a coradically graded coquasi-Hopf algebra of (non-)diagonal type if its corresponding twisted Nichols algebra is of (non-)diagonal type over the group of  group-like elements.\par  
We first consider  coquasi-Hopf algebra $M$ of diagonal type. The classification process is summarized in  figures $(4.3)$ and $(4.17)$  in \cite{QQG}. A key observation is that the figures exhibits a profound connection with the de-equivariantization of finite tensor categories, linking $\operatorname{Comod}(M)$ to the de-equivariantization of $\operatorname{Comod}(H)$, where $H$ is  a finite-dimensional coradically graded Hopf algebra of diagonal type. Using results from \cite{coho} and \cite{AAPS}, we prove this case. It  is worth emphasizing that Angiono and Galindo also proved similar conclusions on de-equivariantization of pointed finite tensor categories in \cite{AC}, but our approach differs by employing a more algebraic perspective.\par 
For a pointed coquasi-Hopf algebra $M$ of non-diagonal type, a distinct method is necessary because its comodule category does not arise as a de-equivariantization of a pointed Hopf algebra comodule category. Instead, we analyze its intrinsic structure. By \cite{huang2024classification}, $M \cong \mathcal{B}(V) \# \mathbbm{k}G$ where  $\mathcal{B}(V)$ is a twisted Nichols algebra of non-diagonal type, $G$ is an abelian group and $\omega$ is a non-abelian cocycle. However, we may relate $M$ to a pointed Hopf algebra of diagonal type via  an exact sequence of tensor categories. Through further calculations and transformations, we demonstrate that $\operatorname{Comod}(M)$ satisfies \rm{\textbf{FGC}} as well. thereby completing the proof of Theorem \ref{B-thm1.3}.
\par 
Section 2 introduces definitions, notations, and foundational results on coquasi-Hopf algebras and tensor categories. Section 3 treats the diagonal type case, while Section 4 addresses the non-diagonal type case and presents the proof of Theorem \ref{B-thm1.3}.

\section{Background on coquasi-Hopf algebra and cohomology } 
This section reviews foundational definitions and results concerning coquasi-Hopf algebras and the cohomology of finite tensor categories.
\subsection{Classification result of finite-dimensional coquasi-Hopf algebras}
We briefly summarize the classification of finite-dimensional coradically graded pointed coquasi-Hopf algebras over finite abelian groups, as established in  \cite{huang2024classification}.\par
 By definition, coquasi-Hopf algebras are exactly the dual of Drinfeld’s quasi-Hopf algebras \cite{Drinfeld}. These are coassociative coalgebras that need not be associative. Analogous to the Hopf algebra case, the classification relies critically on: 
  twisted Yetter-Drinfeld module  categories $\GG$ and twisted Nichols algebras, where $G$ is a finite abelian group and $\omega$ is a normalized $3$-cocycle over $G$.   For formal definitions and examples, see [\citealp{QQG} Section 2].

\par The classification of finite-dimensional coradically graded  coquasi-Hopf algebra over abelian groups hinges on the structure of the coradical $(\mathbbm{k}G,\omega)$, where $\omega$ is a $3$-cocycle on $G$. 
A complete parametrization of 3-cohomology classes for finite abelian groups was achieved in \cite{QQG} as follows. 
Let $G$ be a finite abelian group and so there is no harm to assume that $G=Z_{m_1}\times Z_{m_2}\cdots \times Z_{m_n}$ with $m_i \mid m_{i+1}$ for $1 \leq i \leq n-1$.
Denote $\mathscr{A}$ the the set of all $\mathbb{N}$-sequences:
$$\underline{a}=(a_1,a_2,...,a_l,...,a_n,a_{12},a_{13},...,a_{st},...,a_{n-1,n},a_{123},...,a_{rst},...a_{n-2,n-1,n})$$
such that $0 \leq a_l <m_l$, $0\leq a_{st} <(m_s,m_t)$, $0\leq a_{rst} <(m_r,m_s,m_t)$, with indices ordered lexicographically. Let $g_i$ be the generator of $Z_{m_i}$, $1 \leq i \leq n$. For each $\underline{a} \in \mathscr{A}$, define
\begin{align}\label{B-2.1}
	& \omega\left( g_1^{i_1} \cdots g_n^{i_n}, g_1^{j_1} \cdots g_n^{j_n}, g_1^{k_1} \cdots g_n^{k_n}\right)  = \prod_{l=1}^n \zeta_{m_l}^{a_l i_l\left[\frac{j_l+k_l}{m_l}\right]} \prod_{1 \leq s<t \leq n} \zeta_{m_s}^{a_{s t} k_s\left[\frac{i_t+j_t}{m_t}\right]} \prod_{1 \leq r<s<t \leq n} \zeta_{\left(m_r, m_s, m_t\right)}^{a_{r s t} k_r j_s i_t}
\end{align}
Here $\zeta_m$ denotes a primitive $m$-th root of unity. We further define $\mathscr{A}'$ as the subset of $\mathscr{A}$ satisfying $a_{rst}=0$ for all $1\leq r <s<t \leq n$ and $\mathscr{A}''$ as the subset of $\mathscr{A}$ satisfying $a_{i}=0$, $a_{st}=0$ for all $1\leq i \leq n$, $1\leq s<t \leq n$.
\begin{lemma}\textup{\Propcite{QQG}{3.8}}
	$	\left\lbrace \omega_{\underline{a}}| \underline{a}\in \mathscr{A}\right\rbrace $ forms a complete set of representatives of the normalized $3$-cocycles over $G$ up to $3$-cohomology.
\end{lemma}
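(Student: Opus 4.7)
The plan is to determine $H^3(G, \mathbb{C}^{\times})$ abstractly, exhibit $\{\omega_{\underline{a}}\}_{\underline{a}\in \mathscr{A}}$ as a family of normalized $3$-cocycles of the correct size, and finally prove that the class map $\underline{a}\mapsto[\omega_{\underline{a}}]$ is injective. For the abstract computation, I would use the short exact sequence $0 \to \mathbb{Z} \to \mathbb{C} \to \mathbb{C}^{\times} \to 0$ together with the vanishing $H^n(Z_m, \mathbb{C}) = 0$ for $n \geq 1$ (since $\mathbb{C}$ is uniquely divisible) to obtain $H^1(Z_m, \mathbb{C}^{\times}) \cong Z_m$, $H^2(Z_m, \mathbb{C}^{\times}) = 0$ and $H^3(Z_m, \mathbb{C}^{\times}) \cong Z_m$ for each cyclic factor, and then iterate the K\"unneth formula to decompose $H^3(G, \mathbb{C}^{\times})$ into three kinds of summand: the pure factors $H^3(Z_{m_l}, \mathbb{C}^{\times})\cong Z_{m_l}$, the mixed factors $H^1(Z_{m_s}, \mathbb{C}^{\times}) \otimes H^2(Z_{m_t}, \mathbb{Z}) \cong Z_{(m_s,m_t)}$ for $s<t$, and the triple cup product contributions $\cong Z_{(m_r,m_s,m_t)}$ for $r<s<t$. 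Multiplying these orders yields exactly $|\mathscr{A}|$, so the cardinalities match.

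Next I would check that the three factors appearing in the definition of $\omega_{\underline{a}}$ realise the three types of generator above. The piece $\zeta_{m_l}^{a_l i_l [(j_l+k_l)/m_l]}$ is the classical generator of $H^3(Z_{m_l}, \mathbb{C}^{\times})$ written in terms of the carry operator on $Z_{m_l}$; the piece $\zeta_{m_s}^{a_{st} i_s [(j_t+k_t)/m_t]}$ is the cup product of a character of $Z_{m_s}$ with the integral generator of $H^2(Z_{m_t}, \mathbb{Z})$; and the piece $\zeta_{m_r}^{a_{rst} k_r j_s i_t}$ is a triple cup of characters from $Z_{m_r}$, $Z_{m_s}$, $Z_{m_t}$. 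In each case verifying the $3$-cocycle identity for $\omega_{\underline{a}}$ reduces to a routine carry-arithmetic computation, and the bounds $0 \leq a_l < m_l$, $0 \leq a_{st} < (m_s,m_t)$, $0 \leq a_{rst} < (m_r,m_s,m_t)$ are exactly the ones needed so that each $\omega_{\underline{a}}$ represents a well-defined element of the appropriate summand.

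The hard part will be proving injectivity of the class map, because, combined with the cardinality count, injectivity upgrades automatically to bijectivity. My approach is hierarchical restriction to subgroups. First, restricting $\omega_{\underline{a}}$ to each cyclic subgroup $\langle g_l \rangle$ reads off $a_l$ from $H^3(Z_{m_l}, \mathbb{C}^{\times}) \cong Z_{m_l}$. Second, after subtracting the pure contributions one restricts to each rank-two subgroup $\langle g_s, g_t \rangle$ with $s<t$ to recover $a_{st}$ from $H^3(Z_{m_s} \times Z_{m_t}, \mathbb{C}^{\times})$. Finally, restriction to $\langle g_r, g_s, g_t \rangle$ extracts $a_{rst}$. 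Since a coboundary on $G$ restricts to a coboundary on any subgroup, triviality of $[\omega_{\underline{a}}]$ forces each $a_{\bullet}$ to vanish. The main obstacle is the bookkeeping in these restriction steps: the mixed-type and triple-type terms can look like coboundaries on smaller subgroups unless they are paired against explicit cycles dual to the K\"unneth generators. Handling this carefully, rather than merely invoking a black-box K\"unneth decomposition, is where most of the work lies; a convenient alternative would be to construct explicit $3$-cycles in the bar complex that witness non-triviality of each coordinate of $\underline{a}$ independently, which makes injectivity transparent and completes the proof.
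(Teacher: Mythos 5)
This lemma is not proved in the paper at all: it is imported verbatim from \cite{QQG} (Proposition 3.8), so the relevant comparison is with the proof given there. That proof is constructive and resolution-theoretic: one replaces the bar resolution of $\mathbbm{Z}$ over $\mathbbm{Z}G$ by the tensor product of the standard periodic resolutions of the cyclic factors $Z_{m_i}$, computes the cohomology of the resulting small explicit complex directly (which immediately yields the group $\prod_l Z_{m_l}\times\prod_{s<t}Z_{(m_s,m_t)}\times\prod_{r<s<t}Z_{(m_r,m_s,m_t)}$), and then transports the generators back to the bar complex by an explicit chain map, which is where the formulas $\omega_{\underline{a}}$ come from; completeness and pairwise non-cohomologousness are read off in one stroke. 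Your route --- exponential sequence plus iterated K\"unneth to pin down the abstract group, a cardinality count, and injectivity via restriction to the sub-products $\langle g_{i_1},\dots,g_{i_k}\rangle$ (which works because these are retracts of $G$, so restriction is split) --- is genuinely different and is a viable plan; it trades the explicit chain map for more conceptual homological input, at the cost of having to produce the dual cycles yourself to certify injectivity, which you correctly identify as the real labor. One detail in your sketch needs repair: the summand $Z_{(m_r,m_s,m_t)}$ does not appear as a tensor term in the K\"unneth decomposition of $H^4(G,\mathbbm{Z})$ (the even-degree constraint forbids $p+q+r=4$ with three positive even entries); it arises as a $\operatorname{Tor}$ term, e.g.\ $\operatorname{Tor}(H^3(Z_{m_r}\times Z_{m_s},\mathbbm{Z}),H^2(Z_{m_t},\mathbbm{Z}))$, and correspondingly the ``triple cup product of characters'' must be routed through $\mathbbm{Z}_m$-valued classes and a coefficient pairing into $\mathbb{C}^{\times}$ rather than cupped directly. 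With that adjustment the counting still gives $|\mathscr{A}|$ and the argument closes.
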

Among all the $3$-cocycles over abelian groups, the class of abelian cocycles is particularly significant.  These are intimately connected to the property that a twisted Yetter-Drinfeld module $V$ over $\GG$ is of diagonal type, a crucial aspect of our classification. For details, see [\citealp{QQG}, Section $3$] for details.
\begin{lemma}\textup{[\citealp{QQG}, Corollary 3.13, Proposition 3.14]}\label{B-{lem 5.6}}
Let $G$ be a finite abelian group and $\omega$  a normalized $3$-cocycle over $G$ as in (\ref{B-2.1}),	 then 
\begin{align*}
	\omega \ \text{is an abelian cocycle} &\iff
	a_{rst}=0
\	\text{for all} \ 1\leq r<s<t\leq n \\ &\iff \text{Every Yetter-Drinfeld module over}\ (kG,\omega) \ \text{is of diagonal type}.
\end{align*}

\end{lemma}\par 
Abelian cocycles possess an additional remarkable property: they can be resolved within an extended abelian group. Consider $\mathbb{G} \cong Z_{m_1^2} \times Z_{m_2^2} \times \cdots \times Z_{m_n^2}=\left\langle \sg_1\right\rangle \times \left\langle \sg_2\right\rangle \times \cdots \times \left\langle \sg_n\right\rangle $. There is a group surjection 
$\pi: \ \mathbb{G} \longrightarrow G, \ \ \sg_i\mapsto g_i$,  for $1 \leq i \leq n$.
We may pullback the $3$-cocycles over $G$ and obtain a $3$-cocycles over $\mathbb{G}$. That is, the map 
$$ \pi^*(\omega): \mathbb{G}\times \mathbb{G}\times \mathbb{G} \longrightarrow \mathbbm{k}^{\times}, \ \ (g,h,k)\mapsto \omega(\pi(g),\pi(h),\pi(k)).$$ is a $3$-cocycle over  $\mathbb{G}$. Actually, $\pi^*(\omega)$ is a coboundary. If we define $J_{\underline{a}}$ via $$J_{\underline{a}} :\mathbb{G} \times \mathbb{G} \longrightarrow \mathbbm{k}^{\times}, \ \	(\sg_1^{x_1}\cdots \sg_n^{x_n},\sg_1^{y_1}\cdots \sg_n^{y_n}) \mapsto \prod_{l=1}^{n}\zeta_{m_l^2}^{a_lx_l(y_l-y'_l)}\prod_{1\leq s <t\leq n }\zeta_{m_sm_t}^{a_{st}x_t(y_s-y'_s)},
$$then $\pi^*(\omega_{\underline{a}})=\partial(J_{\underline{a}})$, This result corresponds precisely to  \cite{QQG}.\par 
The following theorem classifies  finite-dimensional coradically graded pointed coquasi-Hopf algebras over  ﬁnite abelian groups.
\begin{thm} \textup{[\citealp{huang2024classification}, Theorem 5.2]}\label{B-thm 2.3}
 Let $M$ be a ﬁnite-dimensional coradically graded pointed coquasi-Hopf algebra over a ﬁnite abelian group with the coradical $M_0 = (\mathbbm{k}G, \omega)$. Then we have $M \cong \mathcal{B}(V)\# \mathbbm{k}G$ for a Yetter-Drinfeld module of ﬁnite type $V \in \GG$.
\end{thm}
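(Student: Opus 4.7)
The plan is to mimic the Andruskiewitsch--Schneider analysis of pointed Hopf algebras, but working throughout with the non-associative algebra (coassociative coalgebra) structure of a coquasi-Hopf algebra. First I would establish the quasi-analog of Radford's projection theorem. Since $M_0 = (\mathbbm{k}G,\omega)$ is a sub-coquasi-Hopf algebra and $M$ is coradically graded, the projection $\pi : M \to M_0 = \mathbbm{k}G$ onto the degree-zero part is a morphism of coquasi-Hopf algebras splitting the inclusion $\mathbbm{k}G\hookrightarrow M$. Consider the subspace of right coinvariants
\[
R := M^{\operatorname{co}\pi} = \{x\in M \mid (\operatorname{id}\otimes\pi)\Delta(x) = x\otimes 1\}.
\]
The content of the projection theorem in this setting is that $R$ inherits the structure of a Hopf algebra in the twisted Yetter--Drinfeld category $\GG$, where the $\mathbbm{k}G$-coaction comes from $(\pi\otimes\operatorname{id})\Delta$ and the $G$-action is the adjoint action twisted through the associator, and that $M$ is reconstructed as the quasi-bosonization $M\cong R\# \mathbbm{k}G$.

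Next, I would transfer the grading: the coradical filtration of $M$ corresponds under the projection to the coradical filtration of $R$, and the hypothesis that $M$ is coradically graded translates into $R = \bigoplus_{n\ge 0}R(n)$ being a connected $\mathbb{N}$-graded Hopf algebra in $\GG$ with $R(0) = \mathbbm{k}$. Put $V := R(1)$, which is an object of $\GG$. A direct computation with the graded comultiplication shows that every element of $V$ is primitive in the braided sense, i.e.\ $\Delta_R(v) = v\otimes 1 + 1\otimes v$, and that the coradical filtration of $R$ coincides with the filtration given by the grading.

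Then I would invoke the universal property of the twisted Nichols algebra: there is a canonical surjection $T_\omega(V)\twoheadrightarrow R$ of graded Hopf algebras in $\GG$, and because $R$ is coradically graded (all skew-primitives of $R$ lie in $V$) together with the fact that $R$ is generated in degree one (which follows from $M$ being coradically graded generated by $M_1$, an argument parallel to the Hopf case), every homogeneous Hopf ideal of degree $\ge 2$ in the kernel of $T_\omega(V)\twoheadrightarrow R$ is contained in the defining ideal of $\mathcal{B}(V)$; hence $R \cong \mathcal{B}(V)$. Combined with the bosonization step, this yields $M \cong \mathcal{B}(V)\# \mathbbm{k}G$. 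Finite type of $V$ is then automatic from $\dim M < \infty$.

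The main obstacle I expect is the careful setup of the quasi-bosonization: the $3$-cocycle $\omega$ obstructs the naive smash-product arguments available in the Hopf case, and one must verify that the induced action of $G$ on $R$ together with the coaction really form a twisted Yetter--Drinfeld module satisfying the compatibility twisted by $\widetilde{\omega}_g$, and that the multiplication of $M$ factors through $R\otimes \mathbbm{k}G$ with the correct quasi-associativity. Once this functorial correspondence between pointed coradically graded coquasi-Hopf algebras with coradical $(\mathbbm{k}G,\omega)$ and graded connected Hopf algebras in $\GG$ is set up, the passage to the Nichols algebra is a purely categorical application of its universal property.
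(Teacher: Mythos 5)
This statement is quoted verbatim from \cite{huang2024classification} (Theorem 5.2); the present paper gives no proof of it, so there is nothing internal to compare against. Judged on its own terms, your outline follows the standard lifting-method strategy (Radford projection, diagram $R = M^{\operatorname{co}\pi}$ as a braided Hopf algebra in $\GG$, passage to the Nichols algebra, quasi-bosonization), which is indeed the architecture of the cited proof. The transfer of the grading, the primitivity of $R(1)$, and the identification of the coradical filtration of $R$ with its grading are all fine, as is the observation that finite type of $V$ is equivalent to $\dim\mathcal{B}(V)<\infty$.

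The genuine gap is the step where you conclude $R\cong\mathcal{B}(V)$. You assert that $R$ is generated in degree one ``which follows from $M$ being coradically graded\dots, an argument parallel to the Hopf case.'' This does not follow: coradical gradedness only gives you that all primitives of $R$ sit in degree one, i.e.\ that the subalgebra generated by $V=R(1)$ is a Nichols algebra sitting \emph{inside} $R$; it does not give you $R=\mathbbm{k}\langle V\rangle$. In the Hopf-algebra setting over an abelian group this is precisely the Andruskiewitsch--Schneider generation conjecture, whose proof (Angiono) requires the full classification of arithmetic root systems and the explicit defining relations of Nichols algebras of diagonal type --- it is the hardest input to the classification, not a formal consequence of the gradation. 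In the coquasi setting the difficulty is compounded: one must first reduce to the Hopf case by pulling $\omega$ back along $\pi:\mathbb{G}\to G$ to a coboundary and twisting (for abelian restrictions of $\omega$), and separately handle the non-diagonal twisted Yetter--Drinfeld modules of types (C1) and (C2), for which no Hopf-theoretic antecedent exists. Without an argument for generation in degree one, your surjection $T_\omega(V)\twoheadrightarrow R$ does not exist (only $T_\omega(V)\to\mathbbm{k}\langle V\rangle\subseteq R$ does), and the conclusion $M\cong\mathcal{B}(V)\#\mathbbm{k}G$ is not reached. The remaining steps (kernel of $T_\omega(V)\to R$ contained in the maximal graded Hopf ideal because $P(R)=R(1)$, hence $R\cong\mathcal{B}(V)$ once generation is known) are correct.
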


\begin{rmk}
	\rm  
 We elaborate on Theorem \ref{B-thm 2.3} and present related results. Recall that if $V \in \GG$, then we can define the  support group $G_V:=<g \in G\mid ^gV\neq 0>$, which is a subgroup of $G$.\par 
	(i) A Yetter-Drinfeld module $\GG$ is said to be of finite type if \par 
$\circ$ $V$ has a standard basis. This is  equivalent to the condition that $\omega\mid_{G_V}$ is an abelian cocycle. \par 
$\circ$ $\Delta_{\chi,E}$ is an arithmetic root system and $\operatorname{ht}(\alpha) < \infty$ for all $\alpha \in \Delta_{\chi,E}$, where
$(\chi,E)$ is a pair associated to $V$.\par 
	(ii) By [\citealp{huang2024classification}, Theorem 3.9],  $\mathcal{B}(V)$ is finite-dimensional if and only if $V$ is finite type. Furthermore, in this case, $\mathcal{B}(V)$ is isomorphic to a Nichols algebra of diagonal type in  $\GGV$. Conversely, if   $\omega \mid_{G_V}$ is nonabelian, then $\mathcal{B}(V)$  is infinite-dimensional by [\citealp{huang2024classification}, Theorem 3.13].
\par (iii) 
The classification divides into two cases: \par
$\circ$ Finite-dimensional twisted Nichols algebras of diagonal type over 
$(G,\omega)$ with 
$\omega$ abelian;\par 
$\circ$ Finite-dimensional twisted Nichols algebras of non-diagonal type where $\omega\mid_{G_V}$
  is abelian.
\end{rmk}
\subsection{De-equivariantization of finite tensor categories}
In this subsection, we review the definition of de-equivariantization and present key results related to this construction.\par 
\begin{definition}
	$\operatorname{(i)}$ Let $\mathcal{C}$ be a finite braided tensor category. Suppose there is a braided tensor functor $\widetilde{i}:\mathcal{C} \rightarrow Z(\mathcal{D})$, such that the functor $Q : \mathcal{C}\overset{\widetilde{i}}{\rightarrow} Z(\mathcal{D}) \overset{F}{\rightarrow}\mathcal{D}$ is  a fully faithful tensor functor, where $F$ is the forget functor. Then we say $i$ is a central embedding.\par 
	$\operatorname{(ii)}$ For any central embedding $\operatorname{Rep}(G) \rightarrow \mathcal{D}$, we can deﬁne the de-equivariantization $\mathcal{D}_G$ , which is the tensor category of $\mathcal{O}(G)$-modules in $\mathcal{D}$, where $\mathcal{O}(G)$ is the linear dual of the group algebra.
\end{definition}
 Let $G$ be a finite group. There exists a profound relationship between braided central Hopf subalgebras of a Hopf algebra $H$ and central embedding  $\operatorname{Rep}(G) \rightarrow \HH$. \par 
\begin{definition}
	Let $H$ be a finite-dimensional Hopf algebra. A braided central Hopf subalgebra of $H$ is a pair $(K, r)$, where $K \subset H$ is a Hopf subalgebra, and $r: H \otimes K \rightarrow k$ is a bilinear form such that:
	\begin{align}
		r\left(h h^{\prime}, k\right) & =r\left(h^{\prime}, k_1\right) r\left(h, k_2\right),\label{B-2.3} \\
		r\left(h, k k^{\prime}\right) & =r\left(h_1, k\right) r\left(h_2, k^{\prime}\right), \label{B-2.4} \\
		r(h, 1) & =\varepsilon(h), \quad r(1, k)=\varepsilon(k), \label{B-2.5}\\
		r\left(h_1, k_1\right) k_2 h_2 & =h_1 k_1 r\left(h_2, k_2\right),\label{B-2.6} \\
		r\left(k, k^{\prime}\right) & =\varepsilon\left(k k^{\prime}\right),\label{B-2.7}
	\end{align}
	
	for all $k, k' \in K$, $h, h' \in H$.
\end{definition}
\begin{lemma}\textup{[\citealp{deequ}, Theorem 3.4]}\label{B-lem2.10}
	Let $H$ be a Hopf algebra and $K\subset H$ a commutative Hopf subalgebra. Then the following set of data are equivalent:\par 
	$\operatorname{(1)}$ A map $r : H \otimes K \rightarrow k$ such that $(K, r)$ is a braided central Hopf subalgebra of $H$.\par 
	$\operatorname{(2)}$ A braided tensor functor $F : {^K\mathcal{M}} \rightarrow \mathcal{Z}( {^H\mathcal{M}})= \HH$ such that the composition with the forgetful functor $\mathcal{Z}( {^H\mathcal{M}}) = \HH \rightarrow {^H\mathcal{M}}$ is fully faithful.
\end{lemma}
\subsection{Cohomology of finite tensor categories}
In this section, $\mathcal{C}$ and $\mathcal{D}$ are finite tensor categories. We present essential lemmas regarding their cohomology for subsequent applications. \par 
\begin{lemma}\textup{[\citealp{cohonichols}, Theorem 1.2.1]}\label{B-lem2.12}
Let $H$ be a ﬁnite-dimensional pointed Hopf algebra whose group of group-like elements is abelian. Then $\operatorname{Comod}(H)$ as well as $\operatorname{Rep}(H)$ satisfies \rm{\textbf{FGC}}.
\end{lemma}
\begin{lemma}\textup{[\citealp{coho}, Proposition 3.3]} \label{B-lem2.105}
	If $F: \mathcal{D}\rightarrow \mathcal{C}$ is a surjective tensor functor, and $\mathcal{D}$ satisfies \rm{\textbf{FGC}}, then $\mathcal{C}$ satisfies \rm{\textbf{FGC}}.
\end{lemma}
 \begin{lemma}\textup{[\citealp{coho}, Theorem 4.4]}\label{B-lem2.11}
 	Suppose $F: \mathcal{D}\rightarrow \mathcal{C}$ is a de-equivariantization of $\mathcal{D}$ with respect to a central embedding $\operatorname{Rep}(G)\rightarrow \mathcal{D}$, or equivalently an equivariantization of $\mathcal{C}$ with respect to a $G$-action. Then $\mathcal{D}$ satisfies \rm{\textbf{FGC}}  if and only if $\mathcal{C}$ satisfies \rm{\textbf{FGC}}.
  
 \end{lemma}
The authors of \cite{exactetingof} generalized Bruguieres and Natale's definition of exact sequences of tensor categories, introducing a new notion of exact sequences with respect to module categories. For brevity, we omit the explicit definition here.
\begin{lemma}\textup{[\citealp{coho}, Corollary 8.13]}\label{B-lem2.13}
Let $$ \mathcal{B}\longrightarrow \mathcal{C} 
\longrightarrow\mathcal{D} \boxtimes \operatorname{End}(\mathcal{M})$$ be an exact sequence of finite tensor categories. In particular, we assume $\mathcal{D}$ is a fusion category, then
	there is a natural identiﬁcation of $\mathbbm{k}$-algebras $$\operatorname{H}^{\bullet} (\mathcal{B}, \mathrm{1}) =\operatorname{H}^{\bullet} (\mathcal{C} , \mathrm{1}).$$ Furthermore, for any object $V$ in $\mathcal{C}$ , there is an identiﬁcation of $\operatorname{H}^{\bullet} (\mathcal{C} , \mathrm{1})$-modules, $\operatorname{H}^{\bullet} (\mathcal{B}, \mathscr{H}^0_{\mathcal{C}}(\mathcal{D}, V )) = \operatorname{H}^{\bullet} (\mathcal{C} , V)$, where the definition of $\mathscr{H}^0_{\mathcal{C}}$ is given in [\citealp{coho}, Definition 8.10].
\end{lemma}
\section{On finite-dimensional coquasi-Hopf algebras of diagonal type}
In this section, given a finite-dimensional coradically graded coquasi-Hopf algebra of diagonal type, we prove that $\operatorname{Comod}(M)$ satisfies \rm{\textbf{FGC}}. We begin by recalling key classification results. \par
\subsection{Overview of classification results} 
In this section, let $G \cong Z_{m_1} \times Z_{m_2} \times \cdots \times Z_{m_n}=\left\langle g_1\right\rangle \times \left\langle g_2\right\rangle \times \cdots \times \left\langle {g_n}\right\rangle $ with an abelian $3$-cocycle $\omega_{\underline{a}}$ and  $\mathbb{G} \cong Z_{m_1^2} \times Z_{m_2^2} \times \cdots \times Z_{m_n^2}=\left\langle \sg_1\right\rangle \times \left\langle \sg_2\right\rangle \times \cdots \times \left\langle \sg_n\right\rangle $.
We assume $\mathcal{B}(V)\in \GGAA$ is a finite-dimensional Nichols algebra of diagonal type. Without loss of generality, we can assume $G=G_V$ (the support group of $V$). As detailed in [\citealp{QQG} Section 4], the authors provided a method to classify such twisted Nichols algebras.  Here, we adopt an equivalent approach to describe the 'return trip' transformation between twisted and ordinary Nichols algebras for later use. 
\begin{center}
	\begin{tikzpicture}
 \node(G) at (-2,-4){Figure I};\node(H) at (6,-4){Figure II};
		\node (A) at (-2,0) {Given $\mathcal{B}(V)\in \GGAA$};
		\node (B) at (-2,-1.5) {$\mathcal{B}(V)\cong \mathcal{B}(V_{\mathcal{D}})\in \BGGA$};
		\node (C) at (-2,-3) {$\mathcal{B}(V_{\mathcal{D}})$ is twisted equivalent to  $\mathcal{B}(V_{\mathcal{D}})^{J_{\underline{a}}^{-1}} \in \BGGo$};
		\node (D) at (6,-3) { Given Nichols algebra $\mathcal{B}(V') \in \BGGo$};
		\node (E) at (6,-1.5) {$\mathcal{B}(V')^{J_{\underline{a}}}\in \BGGJ $};
		\node (F) at (6,0) {$\mathcal{B}(V')^{J_{\underline{a}}}\cong \mathcal{B}(V)\in \GGAA$ };
		\draw[->] (A) --node [right ] {} (B);
		\draw[->] (B) --node [ right] {} (C);	
		\draw[->] (D) --node [right ] {} (E);	
		\draw[->] (E) --node [right ] {If [\citealp{QQG}, $ (4.8)$] holds} (F);
	\end{tikzpicture}
\end{center}\par
Let us illustrate the above figures explicitly via  the language of  root data. Let  $\mathcal{D}=(\mathcal{D}_{\chi,E},S,X)$ be a root data, where: \par 
$\circ$ $\mathcal{D}_{\chi,E}$ is a Dynkin diagram of an arithmetic root system $\Delta_{\chi,E}$; \par $\circ$  $S=(s_{ij})_{m \times n}$, $X=(x_{ij})_{n \times m}$ are two matrices satisfying certain conditions (see [\citealp{QQG}, Definition 4.14]). \par
By [\citealp{QQG}, Corollary 4.13], 
for each root data $\mathcal{D}$, there exists unique  $\underline{a}\in \mathscr{A}'$ such that [\citealp{QQG}, (4.8)] holds for all $1\leq i\leq n$ and $1\leq j \leq m$.
\begin{equation}\label{B-2.2}
	\zeta_{m_i^2}^{m_i x_{i j}}=\zeta_{m_i^2}^{a_i s_{j i} m_i} \prod_{i<k \leq n} \zeta_{m_i m_k}^{a_{i k} s_{j k} m_i}
\end{equation}
For each root data $\mathcal{D}$, we can define a Nichols algebra $\mathcal{B}(V_{\mathcal{D}}) \in \BGGA$ as follows. Let $V_{\mathcal{D}}$ be the Yetter–Drinfeld module in $\BGGA$ with a canonical basis $\left\lbrace X_i \mid 1\leq i\leq m\right\rbrace $ such that
$$\delta_L\left(X_i\right)=\prod_{k=1}^n \sg_k^{s_{i k}} \otimes X_i, \quad 
\sg_i \rhd X_j=\zeta_{m_i^2}^{x_{i j}} \frac{J_{\underline{a}}\left(g_i, \prod_{k=1}^n \sg_k^{s_{i k}}\right)}{J_{\underline{a}}\left(\prod_{k=1}^n \sg_k^{s_{i k}}, g_i\right)} X_j.$$
\par The second statement of  [\citealp{QQG}, Theorem 4.15] tells us that each twisted Nichols algebra in $\GGAA$  must be isomorphic to  $\mathcal{B}(V_{\mathcal{D}}) \in \BGGA$ for some root data $\mathcal{D}$. This is  exactly what Figure I says. \par The first part of that theorem shows that each Nichols algebra in $\BGGA$ satisfying equation $(\ref{B-2.2})$ must be isomorphic to  $\mathcal{B}(V_{\mathcal{D}}) \in \BGGA$ for some root data $\mathcal{D}$. Moreover,  $\mathcal{B}(V_{\mathcal{D}})$ will be uniquely isomorphic to a twisted Nichols algebra in $\GGAA$. This corresponds to the Figure II. 
\par This completes the classification of finite-dimensional twisted Nichols algebras of diagonal type, and consequently, all finite-dimensional coquasi-Hopf algebras of diagonal type.
\subsection{Proof of diagonal type case}
For each $V' \in \BGGA$,
one may define a finite-dimensional Nichols algebra $\mathcal{B}(V') \in \BGGA$ and thus a finite-dimensional coquasi-Hopf algebra $H^{J_{\underline{a}}}=\mathcal{B}(V')\# \mathbbm{k}\mathbb{G}$. Through the processes of twisting and bosonization, we obtain a finite-dimensional coradically graded pointed Hopf algebra: $$H = \mathcal{B}(V')^{J^{-1}_{\underline{a}}} \# \mathbbm{k}\mathbb{G}.$$ On the other hand, if $\underline{a}$ satisfies equation (\ref{B-2.2}), we can define a finite-dimensional coquasi-Hopf algebra $$M\cong \mathcal{B}(V) \# \mathbbm{k}G,$$ where $\mathcal{B}(V)$ is isomorphic to $ \mathcal{B}(V')$. Note that
$\mathbb{G}$ contains a subgroup $\widetilde{\bg}=\left\langle \sg_1^{m_1}\right\rangle \times \left\langle \sg_2^{m_2}\right\rangle \times \cdots \times \left\langle \sg_n^{m_n}\right\rangle\cong G$.
We now state our main result:
\begin{prop}\label{B-thm3.1}
	With the above notions, if equation (\ref{B-2.2}) holds for all $1\leq i\leq n$ and $1\leq j \leq m$, then we have such a tensor equivalence 
	\begin{equation}
		\operatorname{Comod}(H)_{\widetilde{\bg}}\cong \operatorname{Comod}(M).
	\end{equation}
\end{prop}
\begin{proof}
Let $\chi_i$ be the generator of $\widehat{Z_{m_i^2}}$ such that $\chi_i(\sg_i)=\zeta_{m^2_i}$ for all $1 \leq i \leq n$. Define $\phi:\widetilde{\bg} \rightarrow \widehat{\bg}$ by:
\begin{equation}
    \phi((\sg_1^{m_1})^{y_1}\cdots (\sg_n^{m_n})^{y_n})=\prod_{l=1}^{n}\chi_l^{a_ly_lm_l}\prod_{1 \leq j<k \leq n}\chi_k^{a_{jk}y_jm_k}.
\end{equation} Let $k=(\sg_1^{m_1})^{y_1}\cdots (\sg_n^{m_n})^{y_n} \in \widetilde{\bg}$.
Then for any $k' \in \widetilde{\bg}$ and each generator $X_j$, $1\leq j \leq m $, we have\begin{align*}
    &< k', \phi(k)>=1, \\
    &k \rhd X_j= \prod_{i=1}^n \zeta_{m_i^2}^{m_iy_ix_{ij}}X_j\overset{(\ref{B-2.2})}{=}\prod_{i=1}^n\zeta_{m_i^2}^{a_iy_is_{ji}m_i}  \prod_{1\leq i< l \leq n}   \zeta_{m_im_l}^{a_{il}s_{jl}y_im_i}X_j= <\prod_{l=1}^n \sg_l^{s_{jl}},\phi(k)>X_j.
\end{align*}
Hence, the pair $(\widetilde{\bg},\phi)$ satisfies the condition (b)
in \textup{[\citealp{deequ}, Proposition 4.3]}, yielding a braided central Hopf subalgebra $(\mathbbm{k}\widetilde{\bg},r)$ of  $H$.
As $\widetilde{\bg}$ is abelian, we have $\operatorname{Rep}({\widetilde{\bg}})\cong \operatorname{Comod}(\mathbbm{k}\widetilde{\bg})$ as fusion category. By Lemma \ref{B-lem2.10}, there exists a braided tensor functor $F : \operatorname{Rep}(\widetilde{\bg})\rightarrow \mathcal{Z}( {^H\mathcal{M}})= \HH$ whose composition with the forgetful functor $\mathcal{Z}( {^H\mathcal{M}}) = \HH \rightarrow {^H\mathcal{M}}$ is fully faithful. This guarantees the existence of the de-equivariantization $\operatorname{Comod}(H)$, which defined as   the tensor category of  left $\mathbbm{k}\widetilde{\bg}$-modules in $\operatorname{Comod}(H)$.  \par 
On the other hand, there is an epimorphism of coquasi-Hopf algebras:
$$\pi:=(f\otimes p): \ H^{J_{\underline{a}}}=\mathcal{B}(V_{\mathcal{D}})\# \mathbbm{k}\mathbb{G} \longrightarrow M= \mathcal{B}(V) \# \mathbbm{k}G.$$ where $p:\mathbbm{k}\mathbb{G}\rightarrow\mathbbm{k}G$ is given by $\sg_i\mapsto g_i$ is surjective and $f: \mathcal{B}(V_{\mathcal{D}})\rightarrow \mathcal{B}(V)$ is an isomorphism. Direct computation shows:
$$\mathbbm{k}\widetilde{G}=(H^{J_{\underline{a}}})^{\operatorname{co}\pi}:=\left\lbrace x \in H^{J_{\underline{a}}}\mid (\operatorname{id}\otimes \pi)(\Delta(b))=b\otimes 1\right\rbrace.
$$ It follows by [\citealp{exactseq}, Example 6.4] that $\mathbbm{k}\widetilde{\bg}$ admits a structure of commutative algebra in $\mathcal{Z}(\operatorname{Comod}(H^{J_{\underline{a}}}))$, making the tensor category of left $\mathbbm{k}\widetilde{\bg}$-modules in $\operatorname{Comod}(H^{J_{\underline{a}}})$ is tensor equivalent to $\operatorname{Comod}(M)$. Note that, $\operatorname{Comod}(H)\cong \operatorname{Comod}(H^{J_{\underline{a}}})$ as tensor categories since $H^{J_{\underline{a}}}$ and $H$ differs by a cocycle-deformation. Hence the tensor category of left $\mathbbm{k}\widetilde{\bg}$-modules in $\operatorname{Comod}(H)$ is tensor equivalent to $\operatorname{Comod}(M)$.  We conclude: $\operatorname{Comod}(H)_{\widetilde{\bg}}\cong \operatorname{Comod}(M)$.
\end{proof}
\begin{thm}\label{B-thm3.2}
	For each finite-dimensional coradically graded coquasi-Hopf algebra $M$ of diagonal type,   $\operatorname{Comod}(M)$ satisfies \rm{\textbf{FGC}}.
\end{thm}
\begin{proof}
Given a finite-dimensional coradically graded pointed coquasi-Hopf algebra $M\cong \mathcal{B}(V) \# \mathbbm{k}G$ of diagonal type, Figure I associates it to  a unique Hopf algebra $H\cong \mathcal{B}(V_{\mathcal{D}})^{J_{\underline{a}}^{-1}}\# \mathbbm{k}\mathbb{G}$.  The equation (\ref{B-2.2}) holds automatically for $B(V_{\mathcal{D}})$ by [\citealp{QQG}, Theorem 4.15]. Then by Proposition \ref{B-thm3.1}, there is a subgroup $\widetilde{\bg}\subseteq \bg$ such that $	\operatorname{Comod}(H)_{\widetilde{\bg}}\cong \operatorname{Comod}(M)$. Since $\operatorname{Comod}(H)$ satisfies \rm{\textbf{FGC}} by Lemma \ref{B-lem2.105}, so does  $\operatorname{Comod}(M)$ by Lemma \ref{B-lem2.11}.
\end{proof}
\begin{rmk} \rm 
The referee  pointed out that \textup{[\citealp{deequ}, Proposition 4.3]} can simplify the proof of Proposition \ref{B-thm3.1}. We acknowledge referee for providing this better approach.
\end{rmk}
\section{Non-diagonal type case and proof of Theorem \ref{B-thm1.3}}

The classification result of ﬁnite-dimensional twisted Nichols algebras of non-diagonal type is much simpler, but the procedure is more difficult, see \cite{huang2024classification}. We first characterize all finite-dimensional simple twisted Nichols algebras of non-diagonal type over abelian groups.
\begin{lemma}\textup{[\citealp{huang2024classification}, Proposition 2.12]}
	Suppose $V \in \GG$ is a simple twisted Yetter-Drinfeld module of non-diagonal type, $\operatorname{deg}(V)=g$. Then $\mathcal{B}(V)$ is ﬁnite-dimensional if and only if V is one of the following two cases:\par 
	\rm	(C1) $g\rhd v=-v$ for all $v \in V,$\par 
	(C2) $\operatorname{dim}(V)=2$ and $g \rhd v=\zeta_3v$ for all $v \in V$.
\end{lemma}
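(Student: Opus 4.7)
The plan is to use simplicity of $V$ to reduce to a ``scalar'' self-braiding $c=\lambda\tau$ and then appeal to the classification of finite-dimensional Nichols algebras of diagonal type.

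The first step is to pin down the action of $g$ itself on $V$. From the formula $\widetilde\omega_g(e,f)=\omega(g,e,f)\omega(e,f,g)/\omega(e,g,f)$ and the abelianness of $G$, a direct computation yields $\widetilde\omega_g(g,e)=\omega(g,e,g)=\widetilde\omega_g(e,g)$, so the element $g$ is central in the twisted group algebra $\mathbbm{k}_{\widetilde\omega_g}G$. Since $V={}^gV$ is a simple YD module, i.e.\ a simple $\mathbbm{k}_{\widetilde\omega_g}G$-module, Schur's lemma forces $g\rhd v=\lambda v$ for a single scalar $\lambda\in\mathbbm{k}^\times$. Consequently the self-braiding of $V$ simplifies to $c(u\otimes v)=(g\rhd v)\otimes u=\lambda(v\otimes u)$, i.e.\ $c=\lambda\tau$ with $\tau$ the usual flip.

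Choosing any vector-space basis of $V$ realizes $\mathcal{B}(V)$ as a Nichols algebra of diagonal type with $q_{ii}=\lambda$ and $q_{ij}q_{ji}=\lambda^2$ off the diagonal, so Heckenberger's classification of arithmetic root systems determines finiteness. If $\lambda=1$ we obtain $S(V)$, infinite-dimensional; if $\lambda$ is not a root of unity, the Nichols algebra is again infinite. If $\lambda=-1$, then $q_{ij}q_{ji}=1$ forces all Cartan integers $a_{ij}=0$ for $i\neq j$ and $\mathcal{B}(V)=\Lambda V$, finite-dimensional for every $\dim V\geq 2$: this is case \textup{(C1)}. If $\lambda$ is a primitive $N$-th root of unity with $N\geq 3$, the condition $\widetilde q_{ij}=q_{ii}^{a_{ij}}$ reads $\lambda^2=\lambda^{a_{ij}}$, forcing $a_{ij}\equiv -2\pmod N$; the admissible range $a_{ij}\in\{0,-1,-2,-3\}$ then leaves only $N=3$ with $a_{ij}=-1$. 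For $\dim V=2$ this is the $A_2$-Cartan matrix at $\zeta_3$ and yields the $27$-dimensional Nichols algebra of case \textup{(C2)}; for $\dim V\geq 3$ the symmetric matrix with $a_{ii}=2$, $a_{ij}=-1$ fails to be positive-definite (its $3\times 3$ principal minor has determinant $0$), so the associated diagonal Nichols algebra is infinite-dimensional, ruling out $\dim V\geq 3$ in the $\zeta_3$ case.

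The converses are immediate: both $\Lambda V$ and the $A_2$-Nichols algebra at $\zeta_3$ are classical finite-dimensional constructions. The main obstacle is the $N\geq 3$ case of the classification—identifying precisely which Cartan data lie in Heckenberger's finite-type list and ruling out $N\in\{4,5\}$ and the $\dim V\geq 3$ sub-case of $N=3$—whereas the centrality-and-Schur step at the start is essentially formal once one has the explicit cocycle formula for $\widetilde\omega_g$.
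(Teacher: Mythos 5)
This lemma is imported verbatim from \cite{huang2024classification} (Proposition 2.12); the paper gives no proof of it, so your proposal can only be judged as a standalone argument. Your opening step is correct and is indeed the standard one: $\widetilde\omega_g(g,e)=\omega(g,e,g)=\widetilde\omega_g(e,g)$ makes $g$ central in the twisted group algebra $\mathbbm{k}_{\widetilde\omega_g}G$, so Schur's lemma on the simple module $V={}^gV$ gives $g\rhd v=\lambda v$ and hence $c=\lambda\tau$ on $V\otimes V$.

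The genuine gap is the next sentence, where you declare that a choice of basis ``realizes $\mathcal{B}(V)$ as a Nichols algebra of diagonal type with $q_{ij}=\lambda$'' and feed this into Heckenberger's classification. The twisted Nichols algebra $\mathcal{B}(V)$ is computed inside $\GG$, whose associativity constraint is $\omega^{-1}$; the restriction of $\omega$ to the support group $\langle g\rangle$ is automatically abelian (the group is cyclic) but is in general a \emph{nontrivial} $3$-cocycle, so $\mathcal{B}(V)$ is a priori a non-associative algebra and is not determined by the braided vector space $(V,c)$ alone. Concretely, the quantum symmetrizers defining $\mathcal{B}(V)$ involve associator insertions, and already for $\dim V=1$ the nilpotency order of a generator depends on $\omega|_{\langle g\rangle}$ and not only on the eigenvalue $q$ (this is exactly why \cite{QQG} and \cite{huang2024classification} call these ``twisted'' Nichols algebras and never apply Heckenberger's list directly). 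The repair is the resolution trick that this paper itself uses in Section 4: pass to the covering group $\mathbb{G}$ where $\pi^*\omega=\partial J_{\underline{a}}$, lift $V$ to $\widetilde V$ with $\mathcal{B}(\widetilde V)\cong\mathcal{B}(V)$, and twist by $J_{\underline{a}}$ to land in an ordinary (strict) Yetter--Drinfeld category; only then is there an honest diagonal braiding matrix, whose entries carry $J$-corrections that must be shown to still give $q_{ii}=\lambda$ and $q_{ij}q_{ji}=\lambda^2$ before your root-system analysis applies. Two smaller points: the Cartan-type discussion should be checked against the non-Cartan entries of Heckenberger's rank-$2$ list (e.g.\ the candidates with $q_{11}=q_{22}=\zeta_4$ or $\zeta_5$ are excluded by inspection of the list, not merely by positive-definiteness of a Cartan matrix), and the congruence should read $a_{ij}\equiv 2\pmod N$ rather than $-2$ (harmless, since for $N=3$ you still land on $a_{ij}=-1$ and the affine complete-graph matrix for $\dim V\geq 3$).
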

\begin{rmk} 
    \rm In \cite{huang2024classification}, the authors  showed the following results. \par (1) For a rank-2 Yetter-Drinfeld module  $V=V_1 \oplus V_2$ with  $V_1,V_2$ are simple but nonisomorphic, $\mathcal{B}(V)$ is finite-dimensional if and only if $\operatorname{dim}(V_1)=\operatorname{dim}(V_2)=2$ and $V_1, V_2$ are of type (C1) with some additional restriction. \par 
    (2) Any  $V$ of rank $3$ with pairwise nonisomorphic simple twisted Yetter-Drinfeld modules yields an infinite-dimensional Nichols algebra $\mathcal{B}(V)$.
\end{rmk}
For brevity, we focus on simple twisted Yetter-Drinfeld modules of non-diagonal type, as our methods extend directly to rank $2$.  Consider the group $$G \cong \left\langle g_1,g_2,...,g_n\right\rangle$$ for $n \geq 3$, since $\omega$ is a non-abelian $3$-cocycle.  Let $V\in \GG$ be of either type (C1) or (C2) with $\operatorname{deg}(V)=g_1$ and $\operatorname{dim}(V)=m \geq2$.  
By [\citealp{huang2024classification}, Proposition 3.10, Theorem 3.9], there is a finite abelian group $$\mathbb{G} \cong \left\langle \mathbbm{g}_1\right\rangle  \times \left\langle \sg_2 \right\rangle \times \cdots \times \left\langle \sg_n\right\rangle $$ with generators $\sg_i$, such that $\operatorname{ord}(\sg_i)=\operatorname{ord}(g_i)^2$.
Moreover, there exists a group epimorphism $\pi:\mathbb{G}\longrightarrow G$ satisfying $\pi(\sg_i)=g_i$ and a section $\iota$ to $\pi$ such that $\iota(g_i)=\sg_i$ for all $1\leq i \leq n$. Furthermore,  there exists $\widetilde{V} \in \GGB$ such that $\mathcal{B}(\widetilde{V})\cong \mathcal{B}(V)$,
$\pi^*(\omega) \in \mathscr{A}''$ and the support group $\mathbb{G}_{\widetilde{V}}$ of  $\widetilde{V}$ is $ \left\langle \sg_1 \right\rangle$. The following lemma is  immediate. 
\begin{lemma}\label{B-lem4.2}
	The bosonization $H:=\mathcal{B}(\widetilde{V}) \# \mathbbm{k}\bg_{\widetilde{V}}$ is a Hopf algebra and  satisfies \rm{\textbf{FGC}}.
\end{lemma}

\begin{proof}
	Since $\pi^*{\omega} \in \mathscr{A}''$, the restriction $\pi^*{\omega}\mid_{\bg_{\widetilde{V}}\times \bg_{\widetilde{V}} \times \bg_{\widetilde{V}}} \equiv 1$. By [\citealp{huang2024classification}, Corollary 3.16], $\mathcal{B}(\widetilde{V})$ is isomorphic to a Nichols algebra of diagonal type in $\GGVV$(retaining the notation $\mathcal{B}(\widetilde{V})$ for simplicity). Thus $H:=\mathcal{B}(\widetilde{V}) \# \mathbbm{k}\bg_{\widetilde{V}}$ is a Hopf algebra, with an abelian group of group-like elements. Hence $H$ satisfies \rm{\textbf{FGC}} by Lemma \ref{B-lem2.12}.
\end{proof}
Let $M:= \mathcal{B}(V)\# \mathbbm{k}G$ and  $\widetilde{M}:=\mathcal{B}(\widetilde{V})\# \mathbbm{k}\mathbb{G}$. To prove $\operatorname{Comod}(M)$ satisfies \rm{\textbf{FGC}}, we are going to show $\operatorname{Comod}(\widetilde{M})$ does, leveraging the surjective coquasi-Hopf algebra map $\widetilde{M}\rightarrow M$.
\begin{thm}\label{B-thm4.3}
	Both $\operatorname{Comod}(\widetilde{M})$ and $\operatorname{Comod}(M)$ satisfy \rm{\textbf{FGC}}.
\end{thm}
\begin{proof}
	 Clearly, $\widetilde{M}$ and $H$ are generated by group-like and skew primitive elements. Since $\operatorname{dim}(V)=m$ and $\mathcal{B}(V)\cong \mathcal{B}(\widetilde{V})$. We may assume $\widetilde{V}=\operatorname{span}\left\lbrace x_1,x_2,...,x_m\right\rbrace $. Thus, $\widetilde{M}$ is generated by $$\left\lbrace x_1,x_2,...,x_m, \sg_j^i\mid 0\leq i <m_j^2, \ 1\leq j \leq n\right\rbrace, $$ 
	while $H$ is generated by 
	$${	\left\lbrace x_1,x_2,...,x_m, \sg_1^i \mid 0\leq i <m_1^2\right\rbrace}.$$ \par  
The injective coquasi-Hopf algebra map
$i: H \rightarrow \widetilde{M},\  x_j\# \sg_1^i\mapsto x_j \#\sg_1^i$ for $0\leq i \leq n-1$, $1 \leq j \leq m,$
induces a fully faithful tensor functor
	\begin{equation}
	    \iota:  \operatorname{Comod}(H) \longrightarrow \operatorname{Comod}(\widetilde{M}).
	\end{equation}
\par 
	Note that $p: \mathbb{G}\rightarrow \bg/{\bg_{\widetilde{V}}}$ is a group epimorphism given by $p(\sg_1)=1$ and $p(\sg_i)=\sg_i$ for all $2\leq i \leq n$ and $\pi: \mathcal{B}(V) \rightarrow \mathbbm{k}, 1_{\mathcal{\mathcal{B}(V)}}  \mapsto 1_{\mathcal{B}(V)}, \ \ x_i \mapsto 0 $ is an algebra map and a coalgebra map. This yields a coquasi-Hopf algebra surjection:
	$$f=(\pi\otimes p):\mathcal{B}(\widetilde{V}) \# \mathbbm{k}\bg \longrightarrow  \mathbbm{k}{\bg/{\bg_{\widetilde{V}}}}, \ x \#\sg \mapsto \pi(x)\#p(\sg).$$
				which induces a surjective tensor functor:
	\begin{equation} F: \operatorname{Comod}(\widetilde{M}) \rightarrow \operatorname{Vec}_{\bg/{\bg_{\widetilde{V}}}}^{\omega _{\bg/{\bg_{\widetilde{V}}}}}.
	\end{equation}
We denote the fusion category $\operatorname{Vec}_{\bg/{\bg_{\widetilde{V}}}}^{\omega _{\bg/{\bg_{\widetilde{V}}}}}$ as $\mathcal{D}$ for simplicity. Since $f\circ i(x\#\sg_1^i)=\pi(x)\#1 \in \mathbbm{k}$, we see that $F \circ \iota(X)\in \operatorname{Vec}$ for all $X \in \operatorname{Comod}(H)$ by definition of $\iota$ and $F$. That is $\operatorname{Comod}(H) \subseteq\operatorname{Ker}(F)$. \par 
All categories involved are comodule categories of finite-dimensional coquasi-Hopf algebras. Hence $\operatorname{FPdim}(\operatorname{Comod}(H))=\operatorname{dim}_{\mathbbm{k}}(H)$, $\operatorname{FPdim}(\operatorname{Comod}(\widetilde{M}))=\operatorname{dim}_{\mathbbm{k}}(\widetilde{M})$ and $\operatorname{FPdim}(\mathcal{D})=\operatorname{dim}_{\mathbbm{k}}\mathbbm{k}(\bg/{\bg_{\widetilde{V}}})$ by \textup{[\citealp{tensor}, Example 6.1.9]}.  Clearly, \begin{equation*}
\operatorname{dim}_{\mathbbm{k}}(\widetilde{M})=\operatorname{dim}_{\mathbbm{k}}(H)\cdot\operatorname{dim}_{\mathbbm{k}}\mathbbm{k}(\bg/{\bg_{\widetilde{V}}}).
\end{equation*}
Hence 
\begin{equation}
	\operatorname{FPdim}(\operatorname{Comod}(\widetilde{M}))=\operatorname{FPdim}(\operatorname{Comod}(H))\cdot\operatorname{FPdim}(\mathcal{D}).
\end{equation}
By  [\citealp{exactetingof}, Theorem 3.4], there is an exact sequence of finite tensor category
\begin{equation}
	\operatorname{Comod}(H) \longrightarrow \operatorname{Comod}(\widetilde{M})\longrightarrow \mathcal{D}.
\end{equation}
 Lemma \ref{B-lem2.13} provides a natural identiﬁcation of $\mathbbm{k}$-algebras $$\operatorname{H}^{\bullet} (\operatorname{Comod}(H), \mathrm{1}) =\operatorname{H}^{\bullet} (\operatorname{Comod}(\widetilde{M}) , \mathrm{1}).$$ Since$\operatorname{Comod}(H)$ satisfies \rm{\textbf{FGC}}  by Lemma \ref{B-lem4.2}, $\operatorname{H}^{\bullet} (\operatorname{Comod}(\widetilde{M}) , \mathrm{1})$ is a finitely generated algebra. Moreover, for each $W \in \operatorname{Comod}(\widetilde{M})$, there is an identification of $\operatorname{H}^{\bullet} (\operatorname{Comod}(\widetilde{M}) , \mathrm{1})$ module via $$\operatorname{H}^{\bullet} (\operatorname{Comod}(H), \mathscr{H}_{\operatorname{Comod}(\widetilde{M})}^0(\mathcal{D},W)) =\operatorname{H}^{\bullet} (\operatorname{Comod}(\widetilde{M}) , W).$$
Since $\operatorname{H}^{\bullet} (\operatorname{Comod}(H), \mathscr{H}_{\operatorname{Comod}(\widetilde{M})}^0(\mathcal{D},W))  $ is  a finitely generated module of $\operatorname{H}^{\bullet}(\operatorname{Comod}(H),1)$, then $\operatorname{H}^{\bullet} (\operatorname{Comod}(\widetilde{M}) , W)$ is a finitely generated module of $\operatorname{H}^{\bullet} (\operatorname{Comod}(\widetilde{M}) , 1).$ Hence $\operatorname{Comod}(\widetilde{M})$ satisfies \rm{\textbf{FGC}}. Recall that there is a surjection of coquasi-Hopf algebra 
$\widetilde{M} \rightarrow M$, which induces a surjective tensor functor $\operatorname{Comod}(\widetilde{M}) \rightarrow \operatorname{Comod}(M)$. Thus $\operatorname{Comod}(M)$ satisfies \rm{\textbf{FGC}} by lemma \ref{B-lem2.105}.
\end{proof}

\begin{proof}[\rm{\textbf{Proof of Theorem 1.3}}]
	Since $\mathcal{C} \cong \operatorname{Comod}(M)$ for some finite-dimensional coradically graded coquasi-Hopf algebra $M$ over an abelian group, the result follows directly from Theorem \ref{B-thm3.2} and \ref{B-thm4.3}.
\end{proof}
\vspace{3mm}
\noindent\textbf{Acknowledgment} This work is supported by the  NSFC 12271243 and National Key R\&D Program of China 2024YFA1013802. Furthermore, we would like thank the referee for his/her very valuable comments which improved the paper greatly.

\bibliographystyle{plain}\small
\bibliography{f.g.-ref}
\end{document}